\definecolor{webgreen}{rgb}{0,.5,0}
\definecolor{webbrown}{rgb}{.6,0,0}
\newcommand{\stkout}[1]{\ifmmode\text{\sout{\ensuremath{#1}}}\else\sout{#1}\fi}
\newtheorem{theorem}{Theorem}
\newtheorem{lemma}{Lemma}
\newtheorem{corollary}{Corollary}
\theoremstyle{definition}\newtheorem{remark}{Remark}
\theoremstyle{definition}
\begin{document}
\title{A small morphism giving Abelian repetition threshold less than 2}
\author{James D.\ Currie \& Narad Rampersad\\
Department of Mathematics \&
Statistics\\
The University of Winnipeg\\
{\tt j.currie@uwinnipeg.ca, n.rampersad@uwinnipeg.ca}}
\maketitle

\begin{abstract}
It is known that there are infinite words over finite alphabets with Abelian repetition threshold arbitrarily close to 1; however, the construction previously used involves huge alphabets. In this note we give a short cyclic morphism (length 13) over an 8-letter alphabet yielding an Abelian repetition threshold less than 1.8. \end{abstract}
\section{Introduction}
A central topic in combinatorics on words is the construction of infinite words avoiding repetitions. For $1<k<2$, a {\bf $k$-power} is a word $z^k=xyx$, where $z=xy$, and $|xyx|/|xy|=k$. 

For an integer $n>1$, the {\bf repetitive threshold} RT$(n)$ is defined to be the infimum of $r$ such that 
there exists an infinite sequence over the $n$ letter alphabet not containing any $k$-power with $k>r.$ In 1972, Dejean \cite{dejean} conjectured that 
$$\mbox{RT}(n)=\left\{\begin{array}{ll}
7/4,&n=3\\
7/5,& n=4\\
n/(n-1),&n\ne 3,4
\end{array}\right.$$ 
This conjecture was verified in 2009 by the authors, and simultaneously by Micha\"{e}l Rao \cite{proof,rao}.

For $1<k<2$, an {\bf Abelian $k$-power} is a word $x_1yx_2$, where $x_1$ is an anagram of $x_2$, and $|x_1yx_2|/|x_1y|=k$.
Since 2003, one of the authors \cite{analog,probs2} has pointed out the desirability of formulating and proving the correct Abelian version of Dejean's conjecture; that is, giving the value of the {\bf Abelian threshold} ART$(n)$, where `$k$-power' in the definition of RT$(n)$ is replaced by `Abelian $k$-power'. Since Abelian squares -- words $x_1x_2$ where $x_1$ is an anagram of $x_2$ -- are unavoidable on 3 or fewer letters, one seeks ART$(n)$ where $n\ge 4$. In recent years, others \cite{petrova,art} have started to pay some attention to this problem, and we believe the time is ripe for a solution.

Cassaigne and the first author \cite{frac} showed  that $\lim_{n\rightarrow\infty} ART(n)=1$. However, to produce a word containing no Abelian $k$-power with $k\ge 1.8$ using their construction requires an alphabet of size on the order of $5^{485}$. In contrast to this, one of the authors \cite{analog} claimed, based on some computational evidence, that the fixed point of a certain small morphism on an 8-letter alphabet avoids such Abelian $k$-powers.  In this paper we prove this claim. Our methods (and the morphism) in question should generalize to give an upper bound on ART($n$).

\section{Preliminaries}
We use letters such as $u$, $v$, and $w$ to stand for {\bf words}, that is, finite strings of letters. For the empty word, containing no letters, we write $\epsilon$. We denote the set of all letters, or {\bf alphabet}, by $\Sigma$. The length of word $u$ is denoted by $|u|$, and if $a\in\Sigma$, then $|u|_a$ denotes the number of times letter $a$ appears in word $u$. Thus if $\Sigma=\{a,b,n\}$ and $u=banana$, we have $|u|=6$, $|u|_a=3$, $|u|_n=2$.

The set of all finite words over $\Sigma$ is denoted by $\Sigma^*$, and can be regarded as the free semigroup generated by $\Sigma$, where the operation is concatenation, and is written multiplicatively.  If $u$, $v\in\Sigma^*$ we say that $u$ is an {\bf anagram} of $v$, and write $u\sim v$ if for every letter $a\in\Sigma$ we have $|u|_a=|v|_a$. 
If $u, p, v, s\in\Sigma^*$ and $u=pvs$ we say that $p$, $v$, and $s$ are respectively a {\bf prefix}, a {\bf factor}, and a {\bf suffix} of $u$. The {\bf index} of $v$ in $u$ is given by $|p|$. (Thus a prefix has index 0.) 

A {\bf morphism} $f$ on $\Sigma^*$ is a semigroup homomorphism, and is determined by giving $f(a)$ for each letter $a\in\Sigma$. A morphism is {\bf $k$-uniform} if $|f(a)|=k$ for each $a\in\Sigma$. Iteration of morphisms is written using exponents; thus $f^2(u)=f(f(u))$, and so forth. We use the convention that $f^0(u)=u$.

If $a\in\Sigma$ and $f$ is a morphism such that $a$ is a prefix of $f(a)$, then for each non integer $n$ we have $f^n(a)$ is a prefix of $f^{n+1}(a)$. In the case that $|f(a)|>1$, we let the {\bf infinite word} $f^\omega(a)$ be the sequence which has $f^n(a)$ as a prefix for each $n$. It is usual to refer to  $f^\omega(a)$ as the fixed point of $f$ starting with $a$, and to extend notions such as prefix, factor, index, etc., to this infinite word in the obvious ways.

For the remainder of this paper, let $\Sigma=\{0,1,2,\ldots, 7\}.$ Let $\sigma:\Sigma^*\rightarrow\Sigma^*$  be the cyclic shift morphism where
$$\sigma(a)\equiv a+1 \,(\text{mod } 8)\text{ for }a\in\Sigma.$$ Consider the 13-uniform cyclic morphism $f:\Sigma^*\rightarrow\Sigma^*$ given by $$f(0)=0740103050260.$$ Thus for an integer $n$,
$$f(\sigma^n(0))= \sigma^n(f(0)).$$ We see that
\begin{align*}
|f(w)|&=13|w|\text{ for }w\in\Sigma^*\\
|f(w)|_k&=|w|+5|w|_k\text{ for }w\in\Sigma^*, k\in\Sigma.
\end{align*}
\section{Preimages and anagrams}
Let $X_1,X_2\in\Sigma^*$ such that $X_1\sim X_2$, and such that for $i=1,2$, we have 

\begin{align*}
X_i&=s_if(x_i)p_i\\
f(a_i)&=p_iq_i,\text{ some }a_i\in \Sigma\cup\{\epsilon\}, |p_i|\le 12,\\
f(b_i)&=r_is_i,\text{ some }b_i\in \Sigma\cup\{\epsilon\}, |s_i|\le 12.
\end{align*}

We can certainly parse words $X_1$ and $X_2$ this way if they are factors of $f^\omega(0)$ of length at least 12.

From $s_1 f(x_1)p_1\sim s_2f(x_2)p_2$ we have, in particular, that
$$|s_1 f(x_1)p_1|=|s_2f(x_2)p_2|,\text{ so}$$
$$|s_2p_2|-|s_1p_1|=|f(x_1)|-|f(x_2)|.$$

The definition of $f$ and the conditions on the $p_i$ and $s_i$ imply that for each $k\in\Sigma$, $|s_ip_i|_k\le 10$. For the remainder of this section we consider the words $X_i,x_i,p_i,q_i,r_i,s_i$ to be fixed. Define  $\Delta=|x_1|-|x_2|$.
Then 
\begin{align*}13|\Delta|&=|13|x_1|-13|x_2||\\
&=||f(x_1)|-|f(x_2)||\\
&=||s_2p_2|-|s_1p_1||\\
&\le 24.
\end{align*}
We conclude that $|\Delta|\le 1.$ In particular, $|x_1|\ge|x_2|-1$.

\begin{lemma}\label{a or b}Suppose that for some $k$ we have $|x_1|_k\ge |x_2|_k+1$. Then 
$a_2=k$ or $b_2=k$. 
\end{lemma}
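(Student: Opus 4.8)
The idea is to count occurrences of the letter $k$ on the two sides of the anagram relation $X_1\sim X_2$, isolating the contribution of the central block $f(x_i)$ from that of the two boundary pieces $p_i$ and $s_i$. Since $X_i=s_if(x_i)p_i$, the identity $|f(w)|_k=|w|+5|w|_k$ recorded above yields
$$|X_i|_k=|s_i|_k+|f(x_i)|_k+|p_i|_k=|s_ip_i|_k+|x_i|+5|x_i|_k\qquad(i=1,2).$$
Equating the two values (they agree because $X_1\sim X_2$) and rearranging gives
$$|s_2p_2|_k-|s_1p_1|_k=(|x_1|-|x_2|)+5(|x_1|_k-|x_2|_k)=\Delta+5(|x_1|_k-|x_2|_k).$$

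Next I would insert the hypotheses. We have already established $\Delta\ge-1$, and the assumption of the lemma is $|x_1|_k-|x_2|_k\ge 1$, so the right-hand side above is at least $(-1)+5\cdot 1=4$; since $|s_1p_1|_k\ge 0$ this forces $|s_2|_k+|p_2|_k=|s_2p_2|_k\ge 4$.

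Finally I would argue by contradiction. Suppose $a_2\ne k$ and $b_2\ne k$. For any letter $c\ne k$ we have $|f(c)|_k=1$, so the block $f(c)$ contains $k$ exactly once and hence every factor of it contains $k$ at most once. As $p_2$ is a prefix of $f(a_2)$ and $s_2$ is a suffix of $f(b_2)$ (with the convention $f(\epsilon)=\epsilon$, which only makes the bound easier), we get $|p_2|_k\le 1$ and $|s_2|_k\le 1$, hence $|s_2p_2|_k\le 2<4$, a contradiction. Therefore $a_2=k$ or $b_2=k$.

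The whole argument is a short counting computation, so I do not anticipate a real obstacle; the only steps needing any care are the correct bookkeeping of the three pieces $s_i$, $f(x_i)$, $p_i$ in $|X_i|_k$, the use of the structural fact that a single $f$-block contains a letter other than its defining letter only once, and the clean handling of the degenerate cases in which $a_i$ or $b_i$ equals $\epsilon$.
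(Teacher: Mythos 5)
Your proof is correct and follows essentially the same route as the paper: both derive $|s_2p_2|_k\ge 4$ from the anagram condition, the identity $|f(w)|_k=|w|+5|w|_k$, and the bound $|x_1|\ge|x_2|-1$, and then conclude from the fact that $f(c)$ contains $k$ only once when $c\ne k$. Your contrapositive phrasing of the last step is just a cosmetic variant of the paper's observation that $|s_2|_k\ge 2$ or $|p_2|_k\ge 2$.
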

\begin{proof}
We see that
\begin{align*}
|f(x_1)|_k&=|x_1|+5|x_1|_k\\
&\ge (|x_2|-1)+5(|x_2|_k+1)\\
&= |x_2|+5|x_2|_k+4\\
&=|f(x_2)|_k+4.
\end{align*}

Then
\begin{align*}
|s_2p_2|_k&=|s_2f(x_2)p_2|_k-|f(x_2)|_k\\
&=|s_1f(x_1)p_1|_k-|f(x_2)|_k\\
&\ge|s_1f(x_1)p_1|_k-(|f(x_1)|_k-4)\\
&=|s_1p_1|_k+4\\
&\ge 4.
\end{align*}
It follows that either $|s_2|_k\ge 2$ or $|p_2|_k\ge 2$. We conclude that $a_2=k$ or $b_2=k$. 
\end{proof}
It follows that there are at most 2 values $k$ for which $|x_1|_k>|x_2|_k$, and at most two values $j$ for which $|x_2|_j>|x_1|_j$.
\begin{lemma}\label{at least 2}
Suppose that for some $k$ we have  $|x_1|_k\ge|x_2|_k+2$.
Then $a_1x_1b_1\sim a_2x_2b_2$. 
\end{lemma}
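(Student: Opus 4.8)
The plan is to reduce the claim to an anagram relation between the short ``overhang'' words $q_1r_1$ and $q_2r_2$, exploiting that $f$ is injective on Parikh vectors. Writing $\pi(w)=(|w|_0,\dots,|w|_7)$, the identity $|f(w)|_j=|w|+5|w|_j$ expresses $\pi(f(w))$ as an invertible linear function of $\pi(w)$ (one recovers $|w|=\tfrac1{13}\sum_j|f(w)|_j$, and then each $|w|_j$), so $u\sim v$ holds if and only if $f(u)\sim f(v)$. Since $f(a_ix_ib_i)=f(a_i)f(x_i)f(b_i)$ with $f(a_i)=p_iq_i$, $f(b_i)=r_is_i$, and $X_i=s_if(x_i)p_i$, one computes $\pi(f(a_ix_ib_i))=\pi(X_i)+\pi(q_i)+\pi(r_i)$; because $X_1\sim X_2$, it therefore suffices to prove $q_1r_1\sim q_2r_2$.

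First I would pin down the coarse structure. From $|x_1|_k\ge|x_2|_k+2$, the inequalities already derived, and the bound $|s_ip_i|_k\le10$, one gets $|x_1|_k-|x_2|_k=2$, whence $|s_2p_2|_k=10+|s_1p_1|_k+\Delta$, forcing $\Delta\le0$ and $|s_2p_2|_k\ge9$. Since a length-$\le 12$ prefix or suffix of a single block $f(c)$ contains at most five copies of $c$ and at most one copy of any other letter, $|s_2p_2|_k\ge9$ is impossible unless $a_2=b_2=k$, and then $|p_2|_k,|s_2|_k\ge4$, which by the explicit shape of $f(0)$ forces $|p_2|,|s_2|\ge8$. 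The length identity $|s_1|+|p_1|=|s_2|+|p_2|-13\Delta$ now rules out $\Delta=-1$ (it would give $|s_1|+|p_1|\ge29>24$), so $\Delta=0$; hence $|s_2p_2|_k=10$, $|s_1p_1|_k=0$, $|p_2|_k=|s_2|_k=5$, so $|p_2|,|s_2|\ge10$ and $|s_1|+|p_1|=|s_2|+|p_2|\ge20$. In particular $|p_1|,|s_1|\ge8$, so $a_1,b_1$ are genuine letters; and since $p_1,s_1$ contain no $k$ while $f(a_1)$ begins with $a_1$ and $f(b_1)$ ends with $b_1$, also $a_1,b_1\ne k$.

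It remains to prove $q_1r_1\sim q_2r_2$, where now $|q_1r_1|=|q_2r_2|=26-(|s_1|+|p_1|)\le6$. For each letter $j$ one has $|q_ir_i|_j=|f(a_i)|_j+|f(b_i)|_j-|s_ip_i|_j$, so
$$|q_1r_1|_j-|q_2r_2|_j=\bigl(|f(a_1)|_j+|f(b_1)|_j-|f(a_2)|_j-|f(b_2)|_j\bigr)+\bigl(|s_2p_2|_j-|s_1p_1|_j\bigr).$$
Since $a_1,a_2,b_1,b_2$ are all genuine, the first bracket is $\equiv1+1-1-1\equiv0\pmod5$ (each $|f(c)|_j\equiv1\pmod5$), and from $X_1\sim X_2$ with $\Delta=0$ the second bracket equals $5(|x_1|_j-|x_2|_j)$; thus $|q_1r_1|_j-|q_2r_2|_j$ is a multiple of $5$, and being of absolute value $\le6$ it lies in $\{-5,0,5\}$. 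For $j=k$ it is $0$, since $|q_1r_1|_k=(|f(a_1)|_k-|p_1|_k)+(|f(b_1)|_k-|s_1|_k)=1+1=2$ and $|q_2r_2|_k=(6-5)+(6-5)=2$. For $j\ne k$ the value $+5$ is impossible, as it would force $|q_1r_1|_j\ge5$ and hence $|q_1r_1|\ge|q_1r_1|_j+|q_1r_1|_k\ge7$; and $-5$ is impossible because $q_2,r_2$ are a suffix and a prefix of $f(k)$, in which every letter other than $k$ occurs exactly once, so $|q_2r_2|_j\le2$. Hence $|q_1r_1|_j=|q_2r_2|_j$ for every $j$, i.e.\ $q_1r_1\sim q_2r_2$, which finishes the argument.

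The step I expect to be the real crux is the initial reduction to $q_1r_1\sim q_2r_2$ via Parikh-injectivity of $f$: once that is available, the remaining argument needs only that these overhangs are short, that the letter-count differences are divisible by $5$, and that $k$ occurs exactly twice in each overhang word, and it never has to identify $a_1$ or $b_1$ explicitly. The only genuinely fiddly part is the bookkeeping that forces $\Delta=0$ and bounds the $|p_i|,|s_i|$, but that is a direct continuation of the counting already carried out just before the lemma.
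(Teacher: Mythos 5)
Your proof is correct. The first half --- pinning down $|x_1|_k-|x_2|_k=2$, forcing $a_2=b_2=k$, ruling out $\Delta=-1$ by the length identity, and concluding $\Delta=0$ with $|p_2|,|s_2|\ge 10$ --- is essentially the same bookkeeping as the paper's, just organized slightly differently (the paper gets $\Delta\le 0$ from $|s_2p_2|_k\le 10$ and $\Delta\ge 0$ from the length count, exactly as you do). Where you genuinely diverge is the conclusion. The paper finishes by applying Lemma~\ref{a or b} in the reverse direction: since $|x_1|=|x_2|$ and $x_1$ has a surplus of (exactly) two $k$'s, $x_2$ must have a compensating deficit spread over one or two letters, each of which must then lie in $\{a_1,b_1\}$; a short case analysis gives $a_1x_1b_1\sim a_2x_2b_2$. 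You instead never identify $a_1,b_1$: you observe that $f$ is injective on Parikh vectors, reduce the claim to $q_1r_1\sim q_2r_2$ via $\pi(f(a_ix_ib_i))=\pi(X_i)+\pi(q_ir_i)$, and kill the short overhang comparison with a divisibility-by-$5$ argument plus the length bound $|q_ir_i|\le 6$ and the fact that $|q_ir_i|_k=2$. Your route costs a little more setup but buys two things: it replaces the paper's terse final case split (where one must check that $\{a_1,b_1\}$ exactly balances the deficit) with a uniform computation, and it depends only on structural features of $f$ (uniform length, each non-repeated letter occurring once per image block) rather than on Lemma~\ref{a or b} applied with the roles of $x_1$ and $x_2$ swapped --- which makes it somewhat more amenable to the generalization to larger alphabets that the authors advertise.
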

\begin{proof}
Recall that $-1\le \Delta\le 1$. We will show that $\Delta=0$.
First of all, we have
\begin{align*}
|f(x_1)|_k&=|x_1|+5|x_1|_k\\
&\ge |x_1|+5(|x_2|_k+2)\\
&=|x_2|+\Delta+5(|x_2|_k+2)\\
&=|f(x_2)|_k+\Delta+10.
\end{align*}

Then
\begin{align*}
10&\ge |s_2p_2|_k\\
&=|s_1p_1|_k+|f(x_1)|_k-|f(x_2)|_k\\
&\ge|f(x_1)|_k-|f(x_2)|_k\\
&\ge\Delta+10.
\end{align*} 
and we deduce that $0\ge \Delta$. Also,
\begin{align}
\nonumber|s_2p_2|_k&=|s_1p_1|_k+|f(x_1)|_k-|f(x_2)|_k\\
\nonumber&\ge|f(x_1)|_k-|f(x_2)|_k\\
\nonumber&=|x_1|+5|x_1|_k-(|x_2|+5|x_2|_k)\\
\label{delta}&=\Delta+5(|x_1|_k-|x_2|_k)\\
\nonumber&\ge -1 + 5(2)\\ 
\nonumber=9.
\end{align}
The shortest suffix of $f(k)$ containing 4 $k$'s has length 8; the shortest suffix of $f(k)$ containing 5 $k$'s has length 10; no proper suffix of $f(k)$ contains more than 5 $k$'s; there is a single $k$ in $f(\ell)$ for $\ell\in\Sigma-\{k\}$. The analogous statements hold for prefixes of $f(k)$. Thus from $|s_2p_2|_k\ge 9$ we conclude that 
we have $a_2=b_2=k$, and 
either $|p_2|\ge 10$ and $|s_2|\ge 8$, 
or $|p_2|\ge 8$ and $|s_2|\ge 10$; in both cases $|s_2p_2|\ge 18$.

We claim that $\Delta\ge 0$, i.e., $|x_1|\ge|x_2|$. Otherwise, $|x_2|\ge |x_1|+1$ so that
\begin{align*}
24+|f(x_1)|&\ge |s_1f(x_1)p_1|\\
&=|s_2f(x_2)p_2|\\
&\ge 13|x_2|+18\\
&\ge 13(|x_1|+1)+18\\
&=31+|f(x_1)|,
\end{align*}

a contradiction. We earlier established that $\Delta \le 0$, so we now conclude that $\Delta=0$. From inequality (\ref{delta}), we see that $|s_2p_2|_k\ge 10$. This forces  $|p_2|, |s_2|\ge 10$.

Since $\Delta=0$, we have $|x_1|=|x_2|$. Since $|x_1|_k\ge |x_2|_k+2$, either there are letters $j_1\ne j_2$ such that $|x_2|_{j_i}>|x_1|_{j_i}$, or a single letter $j$ such that $|x_2|_j\ge |x_1|_j+2$. In the first case, by Lemma~\ref{a or b}, one of $a_1$ and $b_1$ is $j_1$, and the other $j_2$; in the second case $a_1=b_1=j$. In both cases, $b_1x_1a_1\sim b_2x_2a_2$. \end{proof}

\begin{lemma}\label{hats}
We can choose words $\hat{x}_i\in\{x_i,x_ib_i,a_ix_i,a_ix_ib_i\}$ for $i=1,2$ such that
$\hat{x}_1\sim\hat{x}_2$.
\end{lemma}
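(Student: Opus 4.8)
The plan is to measure, for each letter $k\in\Sigma$, the \emph{defect} $d_k=|x_1|_k-|x_2|_k$, and to split the argument according to its size. If $|d_k|\ge 2$ for some $k$, then after possibly interchanging the indices $1$ and $2$ (every hypothesis on $X_1,x_1,a_1,b_1,\dots$ and $X_2,x_2,a_2,b_2,\dots$ has the same form, so the set-up is symmetric in the two indices) we are in the situation $|x_1|_k\ge|x_2|_k+2$, and Lemma~\ref{at least 2} yields $a_1x_1b_1\sim a_2x_2b_2$ outright; we then take $\hat{x}_i=a_ix_ib_i$, which always lies in $\{x_i,x_ib_i,a_ix_i,a_ix_ib_i\}$, and are done.

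So assume $|d_k|\le 1$ for every $k$, and set $P=\{k:d_k=1\}$ and $N=\{k:d_k=-1\}$; these are disjoint, and by the observation following Lemma~\ref{a or b} we have $|P|\le 2$ and $|N|\le 2$. The idea is to form $\hat{x}_1$ by appending to $x_1$ exactly the letters that $x_2$ carries in surplus, namely one copy of each letter of $N$, and dually to form $\hat{x}_2$ by appending exactly one copy of each letter of $P$. If this can be arranged, then for every $\ell\in\Sigma$ we get $|\hat{x}_1|_\ell-|\hat{x}_2|_\ell=d_\ell+[\ell\in N]-[\ell\in P]=0$ (checking the three cases $\ell\in P$, $\ell\in N$, $\ell\notin P\cup N$ separately), so $\hat{x}_1\sim\hat{x}_2$. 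It remains to see that the letters available to append to $x_1$, namely those among $\{a_1,b_1\}$, can realize exactly the multiset $N$: if $N=\emptyset$ take $\hat{x}_1=x_1$; if $N=\{j\}$, then $|x_2|_j\ge|x_1|_j+1$, so Lemma~\ref{a or b} with the roles of $1$ and $2$ reversed forces $a_1=j$ or $b_1=j$, and we append whichever of $a_1,b_1$ equals $j$; if $N=\{j_1,j_2\}$ with $j_1\ne j_2$, the same lemma forces each of $j_1,j_2$ to equal $a_1$ or $b_1$, and since $j_1\ne j_2$ this forces $\{a_1,b_1\}=\{j_1,j_2\}$, so $\hat{x}_1=a_1x_1b_1$ does the job. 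The construction of $\hat{x}_2$ from $P$ is identical, using Lemma~\ref{a or b} in its stated orientation.

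I expect the only genuine subtlety to be this last point: arguing that a letter occurring in surplus on one side must literally be one of the two boundary letters $a_i,b_i$, and that two \emph{distinct} surplus letters must exhaust $\{a_i,b_i\}$. But that is precisely what Lemma~\ref{a or b}, together with the bound of $2$ on the number of surplus letters, is engineered to supply, so no new estimate is needed — the work is purely combinatorial bookkeeping. The two things to be careful about are (i) that $P$ and $N$ are disjoint and $d_\ell\in\{-1,0,1\}$ throughout, so that appending $N$ to $x_1$ and $P$ to $x_2$ really equalizes every letter count, and (ii) that the index interchange $1\leftrightarrow 2$ is legitimate, which holds because the parsing conditions defining $x_i,a_i,b_i$ are symmetric in the two indices.
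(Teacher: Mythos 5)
Your proof is correct and follows essentially the same route as the paper's: dispose of a defect of size $\ge 2$ via Lemma~\ref{at least 2}, and otherwise use Lemma~\ref{a or b} (in both orientations) to identify each surplus letter with a boundary letter $a_i$ or $b_i$ and append it to the deficient side. Your defect-multiset bookkeeping with $P$ and $N$ merely compresses the paper's explicit case analysis on $\Delta$ and on the number of surplus letters (Cases 1a, 1b, 2a, 2b) into one uniform computation.
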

\begin{proof}
If for each $k\in\Sigma$ we have $|x_1|_k=|x_2|_k$, then let $\hat{x_i}=x_i$, and the result is true. Assume then that $|x_1|_k\ne |x_2|_k$ for some value $k$. Again, if for some $k\in\Sigma$ we have $||x_1|_k-|x_2|_k|\ge 2$, then the result is true by Lemma~\ref{at least 2}. Suppose then that $||x_1|_k-|x_2|_k|\le 1$ for each $k$. By Lemma~\ref{a or b}, $|x_1|_k>|x_2|_k$ for at most two values of $k$, and 
$|x_2|_k>|x_1|_k$ for at most two values of $k$.

Without loss of generality, suppose that $|x_1|\ge |x_2|$, so that $\Delta\ge 0$. Then $\Delta=0$ or $\Delta=1$.

\noindent{\bf Case 1: We have $\Delta =1$}.\\ Since
\begin{align*}\Delta&=|x_1|-|x_2|\\
&=\sum_{k\in\Sigma} |x_1|_k-\sum_{k\in\Sigma} |x_2|_k\\
&=\sum_{k\in\Sigma} (|x_1|_k-|x_2|_k)
\end{align*}
we must have $|x_1|_k> |x_2|_k$ for at least one value of $k$. By supposition, $||x_1|_k-|x_2|_k|\le 1$, so that
$|x_1|_k=1+ |x_2|_k$ for this value. By Lemma~\ref{a or b}, there can be at most two values of $k$ such that $|x_1|_k= 1+ |x_2|_k$. We make cases on the number of such $k$.

\noindent{\bf Case 1a: There is a unique $k_1\in\Sigma$ such that $|x_1|_{k_1}=1+ |x_2|_{k_1}$.}\\
By Lemma~\ref{a or b}, we have $a_2=k_1$ or $b_2=k_1$. 
Let $\hat{x}_1=x_1$ and 
$$\hat{x}_2=
\begin{cases}a_2x_2, &\text{ if }a_2=k_1;\\
x_2b_2, &\text{ otherwise. (In this case }b_2=k_1.)
\end{cases}
$$

We have 
\begin{align*}1&=\Delta\\
&=|x_1|-|x_2|\\
&=\sum_{k\in\Sigma} (|x_1|_k- |x_2|_k)\\
&= 1 +\sum_{k\in\Sigma-\{k_1\}}(|x_1|_k- |x_2|_k)
\end{align*}
Then $\sum_{k\in\Sigma-\{k_1\}}(|x_1|_k- |x_2|_k)=0$. However, each term of this sum is non-positive, since $|x_1|_k- |x_2|_k<1$ for $k\ne k_1$. It follows that no term of this sum can be negative, so that $|x_1|_k=|x_2|_k$ whenever $k\ne k_1$. It follows that
$$\hat{x}_1=x_1\sim kx_2\sim\hat{x}_2\text{, as desired.}$$

\noindent{\bf Case 1b: There are two values $k_1\ne k_2\in\Sigma$ such that for $i=1,2$, $|x_1|_{k_i}=1+ |x_2|_{k_i}$.}\\
By Lemma~\ref{a or b} we have $\{a_2,b_2\}=\{k_1,k_2\}$. Let $\hat{x_2}=a_2x_2b_2$. From
\begin{align*}1&=\Delta\\
&=|x_1|-|x_2|\\
&=\sum_{k\in\Sigma} (|x_1|_k- |x_2|_k)\\
&= 2 +\sum_{k\in\Sigma-\{k_1,k_2\}}(|x_1|_k- |x_2|_k)
\end{align*}
and the fact that $|x_1|_k- |x_2|_k\le 0$ for $k\ne k_1,k_2$, we conclude that there is exactly one value $k_3$ such that $|x_2|_{k_3}>|x_1|_{k_3}$. Then $|x_2|_{k_3}=1+|x_1|_{k_3}$, and by Lemma~\ref{a or b} we conclude that $k_3\in\{a_1,b_1\}$.
Let $$\hat{x}_1=
\begin{cases}a_1x_1, &\text{ if }a_1=k_3;\\
x_1b_1, &\text{ otherwise. (In this case }b_1=k_3.)
\end{cases}
$$
Then $\hat{x_1}\sim\hat{x}_2$, as desired.

\noindent{\bf Case 2: We have $\Delta =0$}.\\ By Lemma~\ref{a or b}, the number of $k$ such that $|x_1|_k>|x_2|_k$ is 1, or 2. (We are assuming there is at least one $k$ where these differ.) Since
\begin{align*}0&=\Delta\\
&=\sum_{k\in\Sigma} (|x_1|_k- |x_2|_k),
\end{align*}
there will be the same number of $k$ such that $|x_2|_k>|x_1|_k$.\\
\noindent{\bf Case 2a: There is a unique $k_1\in\Sigma$ such that $|x_1|_{k_1}=1+ |x_2|_{k_1}$ and a unique $k_2\in\Sigma$ such that $|x_2|_{k_2}=1+ |x_1|_{k_2}$}\\
By Lemma~\ref{a or b}, we have $a_2=k_1$ or $b_2=k_1$ and $a_1=k_2$ or $b_1=k_2$. 
Let 
$$\hat{x}_2=
\begin{cases}a_2x_2, &\text{ if }a_2=k_1;\\
x_2b_2, &\text{ otherwise. (In this case }b_2=k_1.)
\end{cases}
$$
Let 
$$\hat{x}_1=
\begin{cases}a_1x_1, &\text{ if }a_1=k_2;\\
x_1b_1, &\text{ otherwise. (In this case }b_1=k_2.)
\end{cases}
$$

\noindent{\bf Case 2b: There are distinct $k_1,k_2,k_3,k_4\in\Sigma$ such that 
\begin{align*}
|x_1|_{k_1}=1+ |x_2|_{k_1}\\
|x_1|_{k_2}=1+ |x_2|_{k_2}\\
|x_2|_{k_3}=1+ |x_1|_{k_3}\\
|x_2|_{k_4}=1+ |x_1|_{k_4}
\end{align*}}\\
By Lemma~\ref{a or b}, we have $\{a_2,b_2\}=\{k_1,k_2\}$ and
$\{a_1,b_1\}=\{k_3,k_4\}$.
Let $\hat{x}_1=a_1x_1b_1$,  $\hat{x}_2=a_2x_2b_2$. Then $\hat{x}_1\sim\hat{x}_2$.
\end{proof}
\section{Preimages of Abelian powers}
If $k\in\Sigma$, then the index of $f(k)$ in $f^\omega(0)$ is always a multiple of 13.
Suppose $f^n(0)$ contains a prefix $PX_1YX_2$ with 
$X_1\sim X_2$, $|X_1|> 24$. For $i=1,2$, write 

\begin{align*}
X_i&=s_if(x_i)p_i\\
f(a_i)&=p_iq_i,\text{ some }a_i\in \Sigma\cup\{\epsilon\}, |p_i|\le 12,\\
f(b_i)&=r_is_i,\text{ some }b_i\in \Sigma\cup\{\epsilon\}, |s_i|\le 12.
\end{align*}

Since each of the $|X_i|> 24$, we conclude that each $|x_i|$ is non-empty. It follows that the indices of $f(x_1)$ and $f(x_2)$ are multiples of 13, which forces $|p_1Ys_2|$ to be a multiple of 13, and $|p_1s_2|$ to be a multiple of 13. We therefore write $PX_1YX_2=f(Qb_1x_1a_1yb_2x_2)q_2$ where $Qb_1x_1a_1yb_2x_2a_2$ is a prefix of $f^{n-1}(0)$, 
$P=f(Q)r_1$, $Y=q_1f(y)r_2$, and $y\in\Sigma^*$. Note that the $a_i,b_i,p_i,q_i,r_i,s_i,y$ may all be empty.

\begin{lemma}\label{previous power}
Suppose $n>1$ and $f^\omega(0)$ contains an Abelian power $X_1YX_2$ with 
$X_1\sim X_2$, $|X_1|\ge 24n$. Then $f^\omega(0)$ contains an Abelian power $\hat{x}_1\hat{y}\hat{x}_2$ with $\hat{x}_1\sim\hat{x}_2$, $$|X_1Y|\ge \frac{13n}{n+1}|\hat{x}_1\hat{y}|\text{ and }$$
$$\frac{|X_1YX_2|}{|X_1Y|}\le\frac{|\hat{x}_1\hat{y}\hat{x}_2|}{|\hat{x}_1\hat{y}|}+\frac{72}{|X_1Y|}.$$
\end{lemma}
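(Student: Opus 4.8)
The plan is to take the Abelian power $X_1YX_2$ in $f^\omega(0)$ and push it down to an Abelian power in $f^{n-1}(0) \subseteq f^\omega(0)$ by taking $f$-preimages. Since $|X_1| \ge 24n > 24$, the set-up immediately preceding the lemma applies: we parse $X_i = s_i f(x_i) p_i$, extend to $f(a_i) = p_i q_i$, $f(b_i) = r_i s_i$, and obtain that $f(Qb_1x_1a_1yb_2x_2)q_2$ is a prefix of $f^\omega(0)$ with $Qb_1x_1a_1yb_2x_2a_2$ a prefix of $f^{n-1}(0)$. In particular $b_1x_1a_1$, $yb_2x_2a_2$, etc., are factors of $f^{n-1}(0)$, hence of $f^\omega(0)$. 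First I would invoke Lemma~\ref{hats} to choose $\hat{x}_i \in \{x_i, x_ib_i, a_ix_i, a_ix_ib_i\}$ with $\hat{x}_1 \sim \hat{x}_2$; note each $\hat{x}_i$ is a factor of $b_ix_ia_i$, which in turn is a factor of $f^{n-1}(0) \subseteq f^\omega(0)$, and the two occurrences are separated by some (possibly empty) gap word $\hat{y}$ coming from $a_1 y b_2$ (adjusted by whichever of $a_1, b_2$ were absorbed into the $\hat{x}_i$). So $\hat{x}_1\hat{y}\hat{x}_2$ is genuinely an Abelian power occurring in $f^\omega(0)$, and it remains only to verify the two quantitative estimates.

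For the length bounds, the key observation is that $f$ is $13$-uniform, so $|f(x_i)| = 13|x_i|$, and the ``error terms'' $s_i, p_i$ each have length at most $12$ while $a_i, b_i$ have length at most $1$. Thus $|X_i| = 13|x_i| + |s_ip_i|$ lies within $24$ of $13|x_i|$, and $|\hat{x}_i|$ differs from $|x_i|$ by at most $2$. Similarly $|Y| = |q_1 f(y) r_2| = 13|y| + |q_1 r_2|$ with $|q_1 r_2| \le 24$, and $|\hat{y}|$ differs from $|y|$ by at most $2$ (depending on which of $a_1, b_2$ got absorbed). So $|X_1 Y X_2| = 13|x_1 y x_2| + E$ and $|\hat{x}_1 \hat{y} \hat{x}_2| = |x_1 y x_2| + E'$ with $|E|, |E'|$ bounded by small constants, and likewise for the ``prefix parts'' $|X_1 Y|$ and $|\hat{x}_1 \hat{y}|$. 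The bound $\frac{|X_1YX_2|}{|X_1Y|} \le \frac{|\hat{x}_1\hat{y}\hat{x}_2|}{|\hat{x}_1\hat{y}|} + \frac{72}{|X_1Y|}$ should then follow by writing $\frac{|X_1YX_2|}{|X_1Y|} = \frac{13|\hat{x}_1\hat{y}\hat{x}_2| + (\text{small})}{13|\hat{x}_1\hat{y}| + (\text{small})}$ and comparing cross-multiplied numerators: the ``small'' corrections contribute a term of magnitude at most $72$ after dividing through by $|X_1Y|$, where $72$ is chosen to absorb the accumulated constants ($13 \cdot 2$ from absorbing up to two letters on each side, plus the suffix/prefix slack). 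For the first inequality $|X_1 Y| \ge \frac{13n}{n+1}|\hat{x}_1\hat{y}|$, I would use $|X_1| \ge 24n$ together with $|X_1 Y| \ge |X_1| \ge 13(|x_1| + |y|) - (\text{const})$ and the fact that $|\hat{x}_1 \hat{y}| \le |x_1| + |y| + (\text{const})$; since $|X_1 Y|$ is large (at least $24n$), the additive constants can be folded into the multiplicative factor $\frac{13n}{n+1} < 13$, using that $\frac{1}{n+1}$ of the main term $13$ exactly covers a constant-sized loss when the main term is $\ge 24n$.

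The step I expect to be the main obstacle is the careful bookkeeping in the second inequality: one must track exactly how the constants accumulate through the three layers of approximation — from $X_i$ to $f(x_i)$ (slack up to $24$), from $x_i$ to $\hat{x}_i$ (slack up to $2$ letters, i.e.\ counted inside the preimage before applying $f$), and the interaction of these when forming the ratio — and confirm that $72$ is a valid constant. In particular I need to be careful that $\hat{y}$ is chosen consistently with the $\hat{x}_i$: if, say, $\hat{x}_1 = a_1 x_1$ then the letter $a_1$ is no longer available as the first letter of $\hat{y}$, and similarly for $b_2$ at the other end, so $\hat{y}$ is obtained from $y$ by possibly deleting a prefix letter and/or appending a suffix letter relative to $a_1 y b_2$. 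The other delicate point is ensuring the occurrence really sits inside $f^\omega(0)$: this is where I rely on the pre-lemma discussion establishing $Qb_1x_1a_1yb_2x_2a_2$ is a prefix of $f^{n-1}(0)$, so every relevant factor — including the extended ones $a_ix_ib_i$ — occurs in $f^{n-1}(0)$, hence in $f^\omega(0)$. Once those alignment issues are pinned down, the inequalities are a routine (if fiddly) estimation.
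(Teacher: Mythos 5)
Your proposal follows essentially the same route as the paper: choose $\hat{x}_i$ via Lemma~\ref{hats}, define $\hat{y}$ by cases according to which of $a_1,b_2$ are absorbed, locate $\hat{x}_1\hat{y}\hat{x}_2$ inside the prefix $Qb_1x_1a_1yb_2x_2a_2$ of $f^{n-1}(0)$, and then derive both inequalities from the $13$-uniformity of $f$ together with the $\le 24$ slack from $r_1s_2$ and $s_1p_2$ (the paper gets the constant $72$ by cross-multiplying the one-sided bounds $|\hat{x}_1\hat{y}|\le(|X_1Y|+24)/13$ and $|\hat{x}_1\hat{y}\hat{x}_2|\ge(|X_1YX_2|-24)/13$ and using $|X_1YX_2|\le 2|X_1Y|$). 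The outline and the delicate points you flag match the paper's argument.
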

\begin{proof}
Choose $\hat{x}_1$ and $\hat{x}_2$ as in Lemma~\ref{hats}. Then $\hat{x_1}\sim\hat{x_2}.$ Let 
$$\hat{y}=
\begin{cases}y,&\text{ if }\hat{x}_1\in\{b_1x_1a_1,x_1a_1\},\hat{x}_2\in\{b_2x_1a_2,b_2x_2\};\\
a_1y,&\text{ if }\hat{x}_1\in\{b_1x_1,x_1\},\hat{x}_2\in\{b_2x_1a_2,b_2x_2\};\\
yb_2,&\text{ if }\hat{x}_1\in\{b_1x_1a_1,x_1a_1\},\hat{x}_2\in\{x_1a_2,x_2\};\\
a_1yb_2,&\text{ if }\hat{x}_1\in\{b_1x_1,x_1\},\hat{x}_2\in\{x_1a_2,x_2\}.
\end{cases}
$$
Then $f^{n-1}(0)$ has prefix $Qb_1x_1a_1yb_2x_2a_2$ as in the previous discussion, which contains the factor $\hat{x}_1\hat{y}\hat{x}_2$ by our choice of $\hat{x}_1,\hat{y}$, and $\hat{x}_2$.

We note that 
\begin{align*}
|\hat{x}_1\hat{y}|&\le|b_1x_1a_1yb_2|\\
&=\frac{|f(b_1x_1a_1yb_2)|}{13}\\
&=\frac{|X_1Y|+|r_1s_2|}{13}\\
&\le\frac{|X_1Y|+24}{13}\\
&\le\frac{|X_1Y|\frac{n+1}{n}}{13}
\end{align*}
Thus $$|X_1Y|\ge \frac{13n}{n+1}|\hat{x}_1\hat{y}|.$$
Also,
\begin{align*}
|\hat{x}_1\hat{y}\hat{x}_2|&\ge|x_1a_1yb_2x_2|\\
&=\frac{|f(x_1a_1yb_2x_2)|}{13}\\
&=\frac{|X_1YX_2|-|s_1p_2|}{13}\\
&\ge\frac{|X_1YX_2|-24}{13}
\end{align*}

Therefore,
\begin{align*}
\frac{|X_1YX_2|}{|X_1Y|}-\frac{|\hat{x}_1\hat{y}\hat{x}_2|}{|\hat{x}_1\hat{y}|}&\le
\frac{|X_1YX_2|}{|X_1Y|}-\frac{\frac{|X_1YX_2|-24}{13}}{\frac{|X_1Y|+24}{13}}\\
&=
\frac{|X_1YX_2|}{|X_1Y|}-\frac{|X_1YX_2|-24}{|X_1Y|+24}\\
&=
\frac{|X_1YX_2|(\stkout{|X_1Y|}+24)-|X_1Y|(\stkout{|X_1YX_2|}-24)}{|X_1Y|(|X_1Y|+24)}\\
&\le
\frac{2\stkout{|X_1Y|}24+\stkout{|X_1Y|}24}{\stkout{|X_1Y|}(|X_1Y|+24)}\\
&=
\frac{72}{|X_1Y|+24}\\
&\le\frac{72}{|X_1Y|},\text{ as desired}.
\end{align*}
\end{proof}
\begin{corollary}\label{bound}
Suppose $n>1$ and $c\in {\mathbb R}$ are such that whenever $x_1yx_2$ is an Abelian power in $f^\omega(0)$ with $x_1\sim x_2$ and $|x_1|\le 24n$,
then $\frac{|x_1yx_2|}{|x_1y|}\le c$. Then if $X_1YX_2$ is an Abelian power in $f^\omega(0)$ we have 
$$\frac{|X_1YX_2|}{|X_1Y|}\le c+ \frac{k}{1-r},$$
where $k=\frac{3}{n}$ and $r=\frac{n+1}{13n}$
\end{corollary}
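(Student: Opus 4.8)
The plan is a renormalization (descent) argument: repeatedly pull the Abelian power back through $f$ by Lemma~\ref{previous power} until its long part is short, then apply the hypothesis. Set $X_1^{(0)}Y^{(0)}X_2^{(0)}:=X_1YX_2$. Given $X_1^{(i)}Y^{(i)}X_2^{(i)}$ with $X_1^{(i)}\sim X_2^{(i)}$: if $|X_1^{(i)}|\le 24n$ I stop and put $m:=i$; otherwise $|X_1^{(i)}|\ge 24n$, so (using $n>1$) Lemma~\ref{previous power} applies and produces an Abelian power $X_1^{(i+1)}Y^{(i+1)}X_2^{(i+1)}:=\hat x_1^{(i)}\hat y^{(i)}\hat x_2^{(i)}$ in $f^\omega(0)$ with $X_1^{(i+1)}\sim X_2^{(i+1)}$. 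Write $P_i:=|X_1^{(i)}Y^{(i)}|$. Since $|X_1^{(i)}|>24$, the $f$-preimage block $x_1^{(i)}$ is nonempty, hence so is $\hat x_1^{(i)}$, so every $P_i$ is a positive integer; and Lemma~\ref{previous power} gives $P_i\ge\frac{13n}{n+1}P_{i+1}$, i.e.\ $P_{i+1}\le rP_i$ with $r<1$ (as $12n>1$). Thus $(P_i)$ strictly decreases and the recursion terminates at some finite $m$. If $m=0$, i.e.\ already $|X_1|\le 24n$, the hypothesis gives $|X_1YX_2|/|X_1Y|\le c\le c+\frac{k}{1-r}$ and we are done, so assume $m\ge 1$.

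For $i<m$, Lemma~\ref{previous power} gives $\dfrac{|X_1^{(i)}Y^{(i)}X_2^{(i)}|}{|X_1^{(i)}Y^{(i)}|}\le\dfrac{|X_1^{(i+1)}Y^{(i+1)}X_2^{(i+1)}|}{|X_1^{(i+1)}Y^{(i+1)}|}+\dfrac{72}{P_i}$, and since $|X_1^{(m)}|\le 24n$ the hypothesis gives $\dfrac{|X_1^{(m)}Y^{(m)}X_2^{(m)}|}{|X_1^{(m)}Y^{(m)}|}\le c$. Telescoping these yields
$$\frac{|X_1YX_2|}{|X_1Y|}\le c+\sum_{i=0}^{m-1}\frac{72}{P_i}.$$
The key --- and the part I expect to be the only real subtlety --- is that this error sum must be estimated \emph{from the terminal end} of the recursion, not the front: the crude bound $72/P_i\le 72/(24n)=k$ holds at every step but only gives $c+mk$, which is worthless because $m$ grows with $|X_1|$. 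Instead, Lemma~\ref{previous power} was applied at step $m-1$, so $P_{m-1}\ge|X_1^{(m-1)}|\ge 24n$, and iterating $P_i\ge\frac1r P_{i+1}$ backwards gives $P_{m-1-j}\ge r^{-j}P_{m-1}\ge r^{-j}\cdot 24n$ for $0\le j\le m-1$. Hence $72/P_{m-1-j}\le\frac{72}{24n}r^{j}=kr^{j}$, so $\sum_{i=0}^{m-1}72/P_i\le\sum_{j=0}^{m-1}kr^{j}<\frac{k}{1-r}$, which gives the stated bound.

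Beyond this, everything should be routine bookkeeping. The recursion is well defined because $\hat x_1^{(i)}\neq\epsilon$ and $(P_i)$ strictly decreases (both immediate from $n>1$ and Lemma~\ref{previous power}); the constants come out exactly because the periods contract by a factor $r<1$ at each step, so the error contributions $72/P_i$, read in reverse, form a geometric series with ratio $r$ whose largest term --- that of the final application of the lemma --- is at most $72/(24n)=k$. I do not foresee any genuine obstacle; one simply has to keep the direction of the period inequalities straight so that the accumulation of error is geometric rather than linear.
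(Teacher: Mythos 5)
Your proof is correct and follows the same descent--telescoping strategy as the paper: iterate Lemma~\ref{previous power} until $|X_1^{(m)}|\le 24n$, apply the hypothesis to the terminal power, and bound the accumulated error $\sum_{i=0}^{m-1}72/P_i$ by a geometric series with ratio $r$. The one place you diverge is the justification of that geometric bound, and yours is the sound version: since the periods \emph{decrease} under the descent ($P_{i+1}\le rP_i$), the errors must be anchored at the terminal end as you do ($P_{m-1}\ge 24n$, hence $72/P_{m-1-j}\le kr^j$), whereas the paper anchors at the front with the inequality $1/P_i\le r^i/P_0$, which is stated in the reversed direction (it would force the periods to grow), even though it reaches the same final bound $k/(1-r)$.
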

\begin{proof}Repeatedly using Lemma~\ref{previous power}, create a sequence of Abelian powers
$$\left(X_1^{(i)}Y^{(i)}X^{(i)}_2\right)_{i=0}^m$$
such that $$X_1^{(0)}=X_1, Y^{(0)}=Y, X^{(0)}_2=X_2,$$
$$|X_1^{(m)}|\le 24n$$
$$|X_1^{(i)}|> 24n,\text{ for }i=1,2,\ldots, m-1$$ 
$$X_1^{(i+1)}\sim X_2^{(i+1)}\text{ for }i=0,1,\ldots, m-1$$
 \begin{equation}\label{decrease}|X_1^{(i)}Y^{(i)}|\ge \frac{13n}{n+1}|X_1^{(i+1)}Y^{(i+1)}|\end{equation}
$$\frac{|X_1^{(i)}Y^{(i)}X_2^{(i)}|}{|X_1^{(i)}Y^{(i)}|}\le\frac{|X_1^{(i+1)}Y^{(i+1)}X_2^{(i+1)}|}{|X_1^{(i+1)}Y^{(i+1)}|}+\frac{72}{|X_1^{(i)}Y^{(i)}|},\text{ for }i=0,1,\ldots, m-1.$$
Property (\ref{decrease}) coming from Lemma~\ref{previous power} ensures that $m$ is finite.
We see that for $0\le i\le m-1$
$$\frac{1}{|X_1^{(i)}Y^{(i)}|}\le \frac{r^i}{|X_1^{(0)}Y^{(0)}|}.$$
Since $k=\frac{3}{n}\ge \frac{72}{|X_1^{(0)}Y^{(0)}|}$, this means that
$$\frac{|X_1^{(0)}Y^{(0)}X_2^{(0)}|}{|X_1^{(0)}Y^{(0)}|}\le\frac{|X_1^{(m)}Y^{(m)}X_2^{(m)}|}{|X_1^{(m)}Y^{(m)}|}+k\sum_{i=1}^mr^i\le  c+ \frac{k}{1-r}.$$
\end{proof}
\section{Search depth}
\begin{remark}\label{length 2} Say that word $u$ is {\em equivalent} to word $v$ if $u=\sigma^i(v)$ for some integer $i$. Any length 2 factor $v$ of $f^\omega(0)$ is 
equivalent to a factor of 07401030502, the length 11 prefix of $f^\omega(0)$ Since $f$ is cyclic, if $u$ is equivalent to $v$, then every factor of $f(u)$ is equivalent to a factor of $f(v)$. \end{remark}
Let $$g(x)=\left\lfloor \frac{x+24}{13}\right\rfloor.$$
\begin{lemma}\label{13p}
Suppose that $u$ is factor of $f^\omega(0)$, and $g^t(|u|)\le 2$. Then $u$ is equivalent to a factor of the prefix of $f^\omega(0)$ of length $13^t(11)$.
\end{lemma}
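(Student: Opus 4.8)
The plan is to prove the lemma by induction on $t$. For the base case $t=0$ the hypothesis $g^0(|u|)=|u|\le 2$ says that $u$ has length at most $2$; then Remark~\ref{length 2} (together with the trivial facts that a single letter $\sigma^i(0)$ is equivalent to $0$ and that $\epsilon$ is a factor of everything) shows $u$ is equivalent to a factor of the length-$11$ prefix $07401030502$, and since $13^0\cdot 11=11$ we are done. For the inductive step, fix $t\ge 1$, assume the statement for $t-1$, and take a factor $u$ of $f^\omega(0)$ with $g^t(|u|)\le 2$.

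The crucial sublemma I would isolate is: every nonempty factor $u$ of $f^\omega(0)$ is a factor of $f(v)$ for some factor $v$ of $f^\omega(0)$ with $|v|\le g(|u|)$. To get this, I take the given occurrence of $u$ in $f^\omega(0)$ and let $v$ be the contiguous factor of $f^\omega(0)$ whose image blocks $f(\cdot)$ collectively cover that occurrence; concretely, parse $u=s\,f(x)\,p$ exactly as in the discussion preceding Lemma~\ref{previous power}, with $|s|,|p|\le 12$, $f(b)=rs$, $f(a)=pq$ (so $a$ is empty exactly when $p$ is, and $b$ exactly when $s$ is), and set $v=bxa$. Counting letters, $13|v|=|u|+(13|b|-|s|)+(13|a|-|p|)\le |u|+24$, since each of $13|b|-|s|$ and $13|a|-|p|$ lies in $\{0,1,\dots,12\}$; as $|v|$ is an integer this yields $|v|\le\lfloor(|u|+24)/13\rfloor=g(|u|)$. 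The only wrinkle is the short regime $|u|\le 11$, where the suffix/prefix parsing can degenerate (for instance $u$ may sit strictly inside a single image block): there one argues directly that a factor of length at most $11$ meets at most two consecutive image blocks, so $v$ can be taken of length at most $2\le g(|u|)$ when $|u|\ge2$, and of length $1\le g(|u|)$ otherwise.

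Granting the sublemma, the inductive step is short. We may assume $u\ne\epsilon$, and choose $v$ as above with $|v|\le g(|u|)$. Since $g$ — and hence each iterate $g^{t-1}$ — is non-decreasing, $g^{t-1}(|v|)\le g^{t-1}(g(|u|))=g^t(|u|)\le 2$, so the induction hypothesis produces a factor $v'$ of the length-$13^{t-1}\cdot 11$ prefix $W$ of $f^\omega(0)$ with $v$ equivalent to $v'$. Because $f$ is cyclic, Remark~\ref{length 2} then shows that $u$, being a factor of $f(v)$, is equivalent to a factor of $f(v')$; and since $v'$ is a factor of $W$ and $W$ is a prefix of $f^\omega(0)$, $f(v')$ is a factor of $f(W)$, which (as $f$ is $13$-uniform and $f^\omega(0)$ is a fixed point of $f$) is precisely the prefix of $f^\omega(0)$ of length $13|W|=13^t\cdot 11$. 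Hence $u$ is equivalent to a factor of the length-$13^t\cdot 11$ prefix, completing the induction.

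The substance of the argument is entirely in the sublemma, and even there it reduces to the letter-count $13|v|\le|u|+24$; the constant $24=2\cdot 12$ is exactly what accounts for the two partial image blocks of length at most $12$ at the ends of $u$, which is why $g$ carries that constant. I do not expect a genuine obstacle beyond this bookkeeping, the routine case check for $|u|\le 11$, and the (immediate) monotonicity of $g$ and its iterates.
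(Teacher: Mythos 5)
Your proof is correct and follows essentially the same route as the paper's: the paper constructs the preimage chain $u=u_0,u_1,\dots,u_t$ with $|f(u_{i+1})|\le|u_i|+24$ all at once and then pushes the length-$\le 2$ word $u_t$ forward through $f^t$, which is exactly your induction unrolled. Your isolated sublemma (with the $13|v|\le|u|+24$ count and the explicit handling of the short degenerate case) just makes explicit what the paper asserts when it posits the existence of that sequence.
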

\begin{proof}
Create a sequence of words $u=u_0$, $u_1,\ldots u_t$ such that $u_i$ is a factor of $f(u_{i+1})$ for each $i$, and $u_{i+1}$ is a factor of $f^\omega(0)$ with $|f(u_{i+1})|\le |u_i|+24$. Then 
$$u_0\text{ is a factor of }f^t(u_t)\text{ and }$$
$$|u_t|\le g^t(|u_0|)\le 2.$$ 
It follows that $u_t$ is equivalent to a factor of 07401030502, the prefix of $f^\omega(0)$ of length 11. Therefore, $u=u_0$ is equivalent to a factor 
of $f^t(07401030502)$, which has length $13^t(11).$
\end{proof}
\begin{lemma}\label{13^5} Suppose that $f^\omega(0)$ contains a factor 
$X_1YX_2$ such that
\begin{align*} 
X_1&\sim X_2\\
\frac{|X_1YX_2|}{|X_1Y|}&>1.713\\
|X_1|&\le 1000.
\end{align*}
Then $f^5(0)$ contains such a factor.
\end{lemma}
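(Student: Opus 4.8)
The plan is to reduce the claim to a finite computation via Lemma~\ref{13p}. The key observation is that if $X_1YX_2$ is a factor of $f^\omega(0)$ with $|X_1|\le 1000$, then the whole factor $X_1YX_2$ is not much longer: from $X_1\sim X_2$ we have $|X_2|=|X_1|\le 1000$, and the hypothesis $|X_1YX_2|/|X_1Y|>1.713$ together with $|X_1YX_2|=|X_1Y|+|X_2|$ gives $|X_2|/|X_1Y|>0.713$, hence $|X_1Y|<|X_2|/0.713<1403$ and so $|X_1YX_2|<2403$. Thus the entire offending factor $u=X_1YX_2$ has length at most some explicit bound $L$ (a little over $2400$; I would just use $L=2403$, or even be generous and take $L$ a bit larger to be safe).

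Next I would apply Lemma~\ref{13p} to this factor $u$. Since $g(x)=\lfloor (x+24)/13\rfloor$, iterating $g$ starting from $L\approx 2403$ one computes $g(2403)=186$, then $g(186)=16$, then $g(16)=3$, then $g(3)=2$; so $g^4(|u|)\le 2$, and in fact $g^5(|u|)\le 2$ as well. By Lemma~\ref{13p} with $t=5$, any factor $u$ of $f^\omega(0)$ with $|u|\le L$ is equivalent (under some power of the cyclic shift $\sigma$) to a factor of the prefix of $f^\omega(0)$ of length $13^5\cdot 11$, which is a prefix of $f^5(0)$ (since $|f^5(0)|=13^5>13^5\cdot 11/11$... more precisely $f^5(0)$ has length $13^5$ and $13^5\cdot 11$ exceeds that, so I should instead say $u$ is equivalent to a factor of $f^5(07401030502)$). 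I need to be slightly careful here: Lemma~\ref{13p} gives equivalence to a factor of the prefix of length $13^t(11)$, and $f^5(07401030502)$ is a prefix of $f^6(0)$, not $f^5(0)$. To land inside $f^5(0)$ I should instead check that $g^5(L)\le 2$ holds with the correct count, or observe that since $f$ is cyclic and $07401030502$ is a factor of $f^\omega(0)$ equivalent to $f(0)$-prefixes in an obvious way, $f^5$ of that length-$11$ prefix occurs (up to $\sigma$) already inside $f^5(0)$ concatenated with a bounded continuation; the cleanest route is to use $t=4$: check $g^4(L)\le 2$ (which I did above, $g^4(2403)=2$), so $u$ is equivalent to a factor of the length-$13^4\cdot 11 = 313951$ prefix, which is a factor of $f^5(0)$ since $|f^5(0)|=13^5=371293 > 313951$. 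Because the property "contains an Abelian $k$-power with ratio $>1.713$" is invariant under applying $\sigma$ (anagram and length ratios are preserved by the bijective letter renaming $\sigma$), it follows that $f^5(0)$ itself contains such a factor $X_1YX_2$.

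The main obstacle is bookkeeping: getting the constants right so that the iterate of $g$ lands at $\le 2$ at exactly the power of $13$ claimed in the statement ($5$), while respecting that Lemma~\ref{13p} with exponent $t$ produces a prefix of length $13^t\cdot 11$ rather than $13^t$. I expect the resolution is exactly the $t=4$ computation above: $L<13^4\cdot 11$ is huge overkill, $g^4(L)\le 2$, and $13^4\cdot 11<13^5=|f^5(0)|$, so the length-$13^4\cdot 11$ prefix sits inside $f^5(0)$. No genuinely hard step remains — once the factor is confined to a prefix of $f^5(0)$, the claim is immediate by $\sigma$-invariance of the Abelian-power-with-large-ratio property. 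In the write-up I would also remark that this is the step that makes the whole argument effective: combined with Corollary~\ref{bound}, verifying the Abelian repetition threshold bound now reduces to a finite check on $f^5(0)$, carried out by computer.
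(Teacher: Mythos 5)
Your proposal is correct and follows essentially the same route as the paper: bound $|X_1YX_2|<2403$ from the ratio hypothesis and $|X_1|=|X_2|\le 1000$, then invoke Lemma~\ref{13p} and the $\sigma$-invariance of the property. In fact your bookkeeping with $t=4$ is the right reading: you compute $g^4(2403)=2$ and note $13^4\cdot 11=314171<13^5=|f^5(0)|$, whereas the paper's literal invocation ``$g^5(2403)=2$'' fed into Lemma~\ref{13p} would yield the prefix of length $13^5\cdot 11$, which exceeds $|f^5(0)|$; your version lands the factor inside $f^5(0)$ as the statement requires.
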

\begin{proof} For such a word, we have
\begin{align*}
|X_1YX_2|&> 1.713|X_1Y|\\
&=1.713(|X_1YX_2|-|X_2|)\\
&=1.713(|X_1YX_2|-|X_1|)
\end{align*}
so that
\begin{align*}
1.713|X_1|>0.713|X_1YX_2|
\end{align*}
and
\begin{align*}
|X_1YX_2|&<\frac{1.713|X_1|}{0.713}\\
&\le \frac{1713}{0.713}\\
&<2403.
\end{align*}
We find that $g^5(2403)=2$.
The result follows from Lemma~\ref{13p}. 
\end{proof}
\section{Search result and bound}
We searched the word $f^5(0)$. All its factors $X_1YX_2$ with
$X_1\sim X_2$ and $|X_1|\le 1000$ obey
$$\frac{|X_1YX_2|}{|X_1Y|}\le \frac{841}{491}\approx 1.71283.$$
We conclude from Lemma~\ref{13^5} that $f^\omega(0)$ contains no factor $X_1YX_2$ with
\begin{align*} 
X_1&\sim X_2\\
\frac{|X_1YX_2|}{|X_1Y|}&>1.713\\
|X_1|&\le 1000.
\end{align*}
\begin{theorem} Word $f^\omega(0)$ contains no factor $X_1YX_2$ 
with $X_1\sim X_2$ and $$\frac{|X_1YX_2|}{|X_1Y|}>\frac{876775}{489527}\approx 1.79107.$$ 
\end{theorem}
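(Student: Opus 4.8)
The plan is to derive the theorem from the finite computation already described, by a renormalisation argument. Write $\beta=\frac{841}{491}+\frac{78}{997}=\frac{876775}{489527}$; the aim is to show that every Abelian power $X_1YX_2$ occurring in $f^\omega(0)$ (so $X_1\sim X_2$) satisfies $\frac{|X_1YX_2|}{|X_1Y|}<\beta$, which in particular forbids any factor of ratio exceeding $\beta$.

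First I would promote the search output to a base case. The proof of Lemma~\ref{13^5} goes through unchanged with the threshold $\frac{841}{491}$ in place of $1.713$: since $\frac{841}{491}<1.713$, the resulting length bound only shrinks, $|X_1YX_2|<\frac{1000\cdot(841/491)}{(841/491)-1}<2403$, so $g^5(|X_1YX_2|)\le g^5(2403)=2$ and Lemma~\ref{13p} would force such a factor into $f^5(0)$. Since the search found no factor of $f^5(0)$ with $|X_1|\le 1000$ of ratio above $\frac{841}{491}$, every Abelian power $x_1yx_2$ of $f^\omega(0)$ with $|x_1|\le 1000$ obeys $\frac{|x_1yx_2|}{|x_1y|}\le\frac{841}{491}$.

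For the inductive step, let $X_1YX_2$ be an arbitrary Abelian power of $f^\omega(0)$. If $|X_1|\le 998$ then the base case gives $\frac{|X_1YX_2|}{|X_1Y|}\le\frac{841}{491}<\beta$. Otherwise $|X_1|\ge 999$, and I would apply Lemma~\ref{previous power} (with $n=2$, legitimate since $|X_1^{(i)}|\ge 999\ge 48$) repeatedly, producing Abelian powers $X_1^{(0)}Y^{(0)}X_2^{(0)}=X_1YX_2,\dots,X_1^{(m)}Y^{(m)}X_2^{(m)}$ with $m$ the first index where $|X_1^{(m)}|\le 998$; the bound $|X_1^{(i)}Y^{(i)}|\ge\frac{26}{3}|X_1^{(i+1)}Y^{(i+1)}|$ makes these lengths strictly decrease, so $m$ is finite. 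The base case applies to the last one, $\frac{|X_1^{(m)}Y^{(m)}X_2^{(m)}|}{|X_1^{(m)}Y^{(m)}|}\le\frac{841}{491}$. The inequality $|X_1^{(i+1)}Y^{(i+1)}|\le\frac{|X_1^{(i)}Y^{(i)}|+24}{13}$, established inside the proof of Lemma~\ref{previous power}, rearranges to $|X_1^{(i)}Y^{(i)}|\ge 13|X_1^{(i+1)}Y^{(i+1)}|-24$, which, unwound from $|X_1^{(m-1)}Y^{(m-1)}|\ge|X_1^{(m-1)}|\ge 999$, gives $|X_1^{(m-1-j)}Y^{(m-1-j)}|\ge 997\cdot 13^{\,j}+2$ for $0\le j\le m-1$. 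Hence, summing the penalties supplied by Lemma~\ref{previous power},
$$\frac{|X_1YX_2|}{|X_1Y|}\le\frac{|X_1^{(m)}Y^{(m)}X_2^{(m)}|}{|X_1^{(m)}Y^{(m)}|}+\sum_{i=0}^{m-1}\frac{72}{|X_1^{(i)}Y^{(i)}|}<\frac{841}{491}+\frac{72}{997}\sum_{j\ge 0}13^{-j}=\frac{841}{491}+\frac{78}{997}=\beta.$$
As this holds for every Abelian power of $f^\omega(0)$, no factor can have ratio greater than $\beta$, which is the theorem.

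The hard part is the base case rather than the renormalisation. Everything hinges on turning ``$f^\omega(0)$ has no short Abelian power of ratio above $\frac{841}{491}$'' into a finite check, and that is exactly what the search-depth section delivers: iterating the map $g$ (Lemma~\ref{13p}) bounds how far back through $f$ a short high-ratio witness could have been generated, and cyclicity of $f$ (Remark~\ref{length 2}) collapses the candidate set to factors of $f^5(0)$. After that the renormalisation is routine bookkeeping given Lemmas~\ref{a or b}--\ref{previous power}; the single point requiring care is keeping the geometric series tight --- using the exact bound $|X_1^{(i+1)}Y^{(i+1)}|\le\frac{|X_1^{(i)}Y^{(i)}|+24}{13}$ instead of the rounded conclusion of Lemma~\ref{previous power}, and cutting the recursion off at $|X_1|\le 998$ --- so that the penalties sum to precisely $\frac{78}{997}$ and the final constant comes out to $\frac{876775}{489527}$ exactly.
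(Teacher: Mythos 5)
Your proof is correct and follows essentially the same route as the paper: the paper's entire proof is to invoke Corollary~\ref{bound} with $n=1000/24$ and $c=\frac{841}{491}$, and the proof of that corollary is exactly your renormalisation via Lemma~\ref{previous power} plus a geometric series (with your preliminary step of upgrading the search output to the base case ``ratio $\le\frac{841}{491}$ whenever $|x_1|\le 1000$'' matching the paper's use of Lemmas~\ref{13p} and~\ref{13^5}). Your sharper bookkeeping---using the exact recursion $|X_1^{(i+1)}Y^{(i+1)}|\le\frac{|X_1^{(i)}Y^{(i)}|+24}{13}$ rather than the rounded $\frac{13n}{n+1}$ decay---in fact reproduces the theorem's constant $\frac{841}{491}+\frac{78}{997}=\frac{876775}{489527}$ exactly, whereas the corollary as applied in the paper yields the marginally smaller $\frac{841}{491}+\frac{39}{499}$; either way the stated bound follows.
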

\begin{proof} This follows from Corollary~\ref{bound}, letting $n=1000/24$ and $c=\frac{841}{491}$.
\end{proof}

\end{document}